\title{\textbf{Determinants of twisted generalized hybrid weaving knots}}
\author{
	Sahil Joshi\footnote{\textit{Email address:} \texttt{2018maz0001@iitrpr.ac.in}}~~and
	Madeti Prabhakar\footnote{\textit{Email address:} \texttt{prabhakar@iitrpr.ac.in}}}
\newtheorem{thm}{Theorem}
\newtheorem{cor}{Corollary}
\newcommand{\Z}{\mathbb Z}
\newcommand{\s}{\sigma}
\newcommand{\K}{\hat{Q}_3(m_1,-m_2,n,\ell)}
\newcommand{\F}{\mathcal F}
\begin{document}
	
\maketitle

\begin{abstract}
	This article presents a formula for the determinant of the twisted generalized hybrid weaving knot $\K$, which is a closed $3$-braid.
	As a corollary, we prove Conjecture~2 in Singh \& Chbili~\cite[p.\,16]{SC}.\\
	
	\noindent\textit{Key words and phrases}: twisted generalized hybrid weaving knots, knot determinant, Burau representation, quasi-alternating $3$-braid links.\\
	\noindent$2020$ \textit{Mathematics Subject Classification}: 57K10
\end{abstract}

\section{Introduction}

Knots and links that are isotopic to closed $3$-braids form an important class.
Recently, Singh \& Chbili~\cite{SC} defined twisted generalized hybrid weaving knots, which form a subset of the set of closed $3$-braids, and studied their HOMFLY-PT polynomials.
In this paper, we find their knot determinants.
We shall review the background needed for the paper.

The Artin $3$-braid group $B_3$ is finitely presented as
\begin{equation*}
	B_3 = \langle \s_1,\s_2 \mid \s_1\s_2\s_1 = \s_2\s_1\s_2 \rangle.
\end{equation*}

We consider the \emph{reduced} Burau representation $\varphi: B_3\to GL_2(\Z[t^{\pm1}])$ under which the braid generators
\begin{equation*}
	\s_1 \mapsto
	\begin{bmatrix}
		-t & 1\\
		0  & 1
	\end{bmatrix}, \qquad\s_2 \mapsto
	\begin{bmatrix}
		1 & 0\\
		t & -t
	\end{bmatrix}.
\end{equation*}

\begin{thm}[see \textsc{Theorem} 3.11 in Birman~\cite{Bir}]\label{Birman}
	If $\beta \in B_3$ and $\Delta_{\mathbf{cl}(\beta)}(t)$ denotes the Alexander polynomial of the knot $\mathbf{cl}(\beta)$ formed by closing the braid $\beta$, then for some integer $k$,
	\begin{equation}\label{eq:AB}
		\pm\,t^k \,(1 + t + t^2) \, \Delta_{\mathbf{cl}(\beta)}(t) = \det(\varphi(\beta) - I),
	\end{equation}
	where $I$ stands for the identity matrix in $GL_2(\Z[t^{\pm1}])$.
\end{thm}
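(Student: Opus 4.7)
My approach is via Fox calculus on the standard braid presentation of the knot group.

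\emph{Step 1 (set-up).} Writing $r_i := \beta(x_i) \cdot x_i^{-1}$ where $\beta$ acts on $F_3 = \langle x_1, x_2, x_3 \rangle$ via the Artin embedding, one has
\begin{equation*}
	\pi_1\!\left(S^3 \setminus \mathbf{cl}(\beta)\right) = \langle x_1, x_2, x_3 \mid r_1, r_2, r_3 \rangle.
\end{equation*}
Taking Fox derivatives of the $r_i$ and abelianizing each $x_j$ to $t$ produces the Alexander matrix. The classical identification of the abelianized Fox Jacobian of the Artin action with the \emph{unreduced} Burau representation $\tilde{\varphi}\colon B_3 \to GL_3(\Z[t^{\pm 1}])$ then says that this matrix is exactly $\tilde{\varphi}(\beta) - I_3$.

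\emph{Step 2 (minors are $\Delta$).} Verify on the generators $\s_1, \s_2$ that $(1,1,1)^T$ is a right $1$-eigenvector and $(1, t, t^2)$ a left $1$-eigenvector of $\tilde{\varphi}(\s_i)$; multiplicativity propagates both properties to every $\beta \in B_3$. Hence rows and columns of $\tilde{\varphi}(\beta) - I_3$ are $\Z[t^{\pm 1}]$-linearly dependent, the rank is at most $2$, and a short computation with these dependencies shows that the nine $2 \times 2$ minors agree up to unit factors $\pm t^k$. By the Fox-calculus definition, each such minor equals $\pm t^k \, \Delta_{\mathbf{cl}(\beta)}(t)$.

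\emph{Step 3 (reducing to $\varphi$).} Because $\tilde{\varphi}(\beta)$ fixes $(1,1,1)^T$, it descends to the reduced Burau action $\varphi$ on $\Z[t^{\pm 1}]^3 / \langle (1,1,1)^T \rangle \cong \Z[t^{\pm 1}]^2$. Apply the snake lemma to the short exact sequence
\begin{equation*}
	0 \to \Z[t^{\pm 1}] \xrightarrow{\,(1,1,1)^T\,} \Z[t^{\pm 1}]^3 \to \Z[t^{\pm 1}]^2 \to 0
\end{equation*}
with vertical maps $\tilde{\varphi}(\beta) - I$ and $\varphi(\beta) - I$ on middle and quotient, and the zero map on the submodule. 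When $\mathbf{cl}(\beta)$ is a knot, $\det(\varphi(\beta) - I) \neq 0$ so $\varphi(\beta) - I$ is injective, and the resulting long exact sequence collapses to
\begin{equation*}
	0 \to \Z[t^{\pm 1}] \to \operatorname{coker}\!\bigl(\tilde{\varphi}(\beta) - I\bigr) \to \operatorname{coker}\!\bigl(\varphi(\beta) - I\bigr) \to 0.
\end{equation*}
The free summand of the middle term is identified with $\Z[t^{\pm 1}]$ through the surjection $(a,b,c) \mapsto a + tb + t^2 c$ coming from the left eigenvector $(1, t, t^2)$, and the class of $(1,1,1)^T$ maps to $1 + t + t^2$ under this surjection. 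Comparing zeroth elementary ideals in the exact sequence yields $\det(\varphi(\beta) - I) = \pm t^k (1 + t + t^2) \Delta_{\mathbf{cl}(\beta)}(t)$.

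The chief subtlety lies in Step 3: one must pinpoint the extra factor as the pairing $\langle (1, t, t^2),\,(1,1,1)^T \rangle = 1 + t + t^2$ between the left and right fixed directions of $\tilde{\varphi}(\beta)$, and verify that this is exactly what the snake lemma contributes on elementary ideals. The ambiguity $\pm t^k$ in~\eqref{eq:AB} absorbs both the standard Fox-calculus indeterminacy and the choice of basis in identifying $\Z[t^{\pm 1}]^3/\langle (1,1,1)^T\rangle$ with the $\Z[t^{\pm 1}]^2$ on which the reduced Burau matrices act as given in the statement.
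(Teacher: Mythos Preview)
The paper does not supply its own proof of Theorem~\ref{Birman}; it is quoted from Birman's book as background, so there is no in-paper argument to compare against. Your outline follows the classical Fox-calculus route that underlies Birman's original proof, and Steps~1 and~2 are correct and standard.

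Step~3 is where the argument is soft. The sentence ``comparing zeroth elementary ideals in the exact sequence'' carries the whole weight, but Fitting ideals are not multiplicative in short exact sequences over a non-principal ring such as $\Z[t^{\pm1}]$, so as written this is a gap. The cleanest repair bypasses cokernels entirely: with $U=\langle(1,1,1)^T\rangle$ and $W=\ker\bigl((1,t,t^2)\cdot{-}\bigr)$ inside $V=\Z[t^{\pm1}]^3$, the map $\tilde\varphi(\beta)-I_3$ factors as
\[
V \twoheadrightarrow V/U \xrightarrow{\ g\ } W \hookrightarrow V,
\]
and your Step~2 shows $\det g=\Delta_{\mathbf{cl}(\beta)}$ up to units. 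Since $\varphi(\beta)-I$ on $V/U$ is the composite $(\pi\iota)\circ g$, where $\pi\iota\colon W\hookrightarrow V\twoheadrightarrow V/U$, it suffices to check $\det(\pi\iota)=1+t+t^2$ by a direct $2\times2$ computation; this determinant is exactly the pairing $\langle(1,t,t^2),(1,1,1)^T\rangle$ you isolated, and multiplying determinants gives~\eqref{eq:AB}. (Equivalently, one can rescue the snake-lemma version by writing down an explicit square presentation of $\operatorname{coker}(\varphi(\beta)-I)$ from a square presentation of the torsion of $M$ together with the extra relation coming from $(1,1,1)^T$, and then using that $\operatorname{Fitt}_0$ of a square-presented module is its determinantal ideal.) Finally, your hypothesis ``when $\mathbf{cl}(\beta)$ is a knot'' is only used to make $\varphi(\beta)-I$ injective; you should note separately that when $\det(\varphi(\beta)-I)=0$ both sides of~\eqref{eq:AB} vanish and the identity holds trivially.
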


The reader may refer to \cite{Bir} which explains this relationship in detail.
For any knot or link $K$, its determinant $\det(K)$ and Alexander polynomial $\Delta_K(t)$ are well-known invariants of the isotopy class of $K$.
Further, the two are related by $\det(K) = \vert\Delta_K(-1)\vert$.

Let $m_1, m_2$ and $n$ be positive integers and let $\ell$ be any integer.
The \emph{twisted generalized hybrid weaving knot} $\K$ is the knot or link isotopic to the closure of the $3$-strand braid $(\s_1^{m_1}\s_2^{-m_2})^n (\s_1\s_2)^{3\ell}$.

The class $\F = \{\K : m_1, m_2, n\in\Z_+,\,\ell\in\Z\}$ contains special types of knots and links.
For example, $\hat{Q}_3(1,-1,n,0)$ represents the weaving knot $W(3,n)$, and in general, $\hat{Q}_3(m,-m,n,0)$ represents the hybrid weaving knot $\hat{W}_3(m,n)$, $\hat{Q}_3(1,-1,n,\ell)$ is a quasi-alternating knot or link if and only if $\ell \in \{-1,0,1\}$, etc.
More examples are given in Table~\ref{list}.

In Section~2, we apply Theorem~\ref{Birman} to derive knot determinants for the class of twisted generalized hybrid weaving knots.
In Section~3, we prove a conjecture of Singh and Chbili and retrieve determinants of several links and link families.
In the end, we mention an example of twisted generalized hybrid weaving knots that is not quasi-alternating, and vice-versa.

\section{The Main Result}

\begin{thm}\label{det}
	For $\K\in\F$, the knot determinant is given by
	\begin{equation}\label{eq:det}
		\begin{split}
			\det(\K) & = \Bigg\vert
			\left(\frac{2 + m_1m_2 + \sqrt{m_1^2m_2^2+4m_1m_2}}{2}\right)^n +\\
			&\hspace{-2em}
			\left(\frac{2 + m_1m_2 - \sqrt{m_1^2m_2^2+4m_1m_2}}{2}\right)^n + (-1)^{\ell+1}\,2\ \Bigg\vert.
		\end{split}
	\end{equation}
\end{thm}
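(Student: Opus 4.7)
The plan is to invoke Theorem~\ref{Birman} at $t = -1$, where $1 + t + t^2 = 1$. Together with the identity $\det(K) = |\Delta_K(-1)|$, equation~(\ref{eq:AB}) then reduces to the assertion that $\det(\mathbf{cl}(\beta))$ equals the absolute value of $\det(\varphi(\beta) - I)$ evaluated at $t = -1$. So it suffices to compute this one scalar for $\beta = (\s_1^{m_1}\s_2^{-m_2})^n(\s_1\s_2)^{3\ell}$.

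First I would specialise the Burau matrices at $t = -1$. Since $\varphi(\s_1)$ is upper triangular and $\varphi(\s_2)^{-1}$ is lower triangular, both with $1$'s on the diagonal once $t$ is set to $-1$, their $m_1$-th and $m_2$-th powers are unipotent integer matrices with off-diagonal entries $m_1$ and $m_2$, respectively. Multiplying them yields
\[
    A := \varphi(\s_1^{m_1}\s_2^{-m_2})|_{t=-1} = \begin{bmatrix} 1 + m_1 m_2 & m_1\\ m_2 & 1\end{bmatrix},
\]
with $\det A = 1$ and $\operatorname{tr} A = 2 + m_1 m_2$, so that the eigenvalues of $A$ are precisely the two quantities $\lambda_\pm = \tfrac12\bigl((2 + m_1 m_2) \pm \sqrt{m_1^2 m_2^2 + 4 m_1 m_2}\bigr)$ appearing in~(\ref{eq:det}).

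Next I would handle the twist factor $(\s_1\s_2)^{3\ell}$. Since $(\s_1\s_2)^3$ generates the centre of $B_3$, a short direct matrix computation gives $\varphi((\s_1\s_2)^3) = t^3\,I$, which evaluates to $-I$ at $t = -1$. Consequently $\varphi(\beta)|_{t=-1} = (-1)^\ell A^n$, and using $\lambda_+ \lambda_- = \det A = 1$,
\[
    \det\bigl((-1)^\ell A^n - I\bigr) = \bigl((-1)^\ell \lambda_+^n - 1\bigr)\bigl((-1)^\ell \lambda_-^n - 1\bigr) = 2 - (-1)^\ell(\lambda_+^n + \lambda_-^n).
\]
Taking absolute values and rewriting $-(-1)^\ell$ as $(-1)^{\ell+1}$ then recovers exactly~(\ref{eq:det}).

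I do not anticipate a serious obstacle: the key conceptual move is evaluating at $t = -1$ \emph{before} taking matrix powers, which replaces an a priori infinite family of Laurent-polynomial identities by a single elementary $2 \times 2$ integer linear-algebra problem, and the eigenvalues of $A$ then fall out in exactly the form predicted by the theorem. The only step warranting its own little computation is the central-element identity $\varphi((\s_1\s_2)^3) = t^3 I$, which is needed at every level $\ell$ of the twist.
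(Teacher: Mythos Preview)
Your proposal is correct and follows essentially the same route as the paper: specialise the Burau representation at $t=-1$, note that $(\s_1\s_2)^3$ maps to $-I$ there, compute the matrix of $\s_1^{m_1}\s_2^{-m_2}$ (the paper calls it $C$), and evaluate $\det((-1)^\ell C^n - I)$ via its eigenvalues. The only cosmetic difference is that the paper explicitly diagonalises $C = PDP^{-1}$ before taking the determinant, whereas you go straight from trace and determinant to the factorisation $\bigl((-1)^\ell\lambda_+^n-1\bigr)\bigl((-1)^\ell\lambda_-^n-1\bigr)$; the content is the same.
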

\begin{proof}
	Let $\varphi$ be the group homomorphism mentioned in Theorem~\ref{Birman} and let
	\begin{equation*}
		A = \varphi(\s_1)\big\vert_{t=-1} = 
		\begin{bmatrix}
			1 & 1\\
			0 & 1
		\end{bmatrix}, \quad
		B = \varphi(\s_2)\big\vert_{t=-1} = 
		\begin{bmatrix}
			1  & 0\\
			-1 & 1
		\end{bmatrix}.
	\end{equation*}
	If $\beta = (\s_1^{m_1}\s_2^{-m_2})^n (\s_1\s_2)^{3\ell}$, then $\mathbf{cl}(\beta) = \K$ and $\varphi(\beta)\big\vert_{t=-1} = (A^{m_1} B^{-m_2})^n (AB)^{3\ell}$.
	Observe that $(AB)^3 = -I$.
	Put
	\begin{align*}
		C = A^{m_1} B^{-m_2} =
		\begin{bmatrix}
			1 + m_1m_2	& m_1\\
			m_2			& 1
		\end{bmatrix}.
	\end{align*}
	The characteristic polynomial of $C$ is $f(x) = x^2 - (2+m_1m_2)x + 1$.
	The eigenvalues of $C$ are
	\begin{equation*}
		\frac{2 + m_1m_2 + \sqrt{m_1^2m_2^2 + 4m_1m_2}}{2},\quad
		\frac{2 + m_1m_2 - \sqrt{m_1^2m_2^2 + 4m_1m_2}}{2}.
	\end{equation*}
	Therefore, $C$ is diagonalizable.
	Hence, there exists an invertible matrix $P$ such that $C = PDP^{-1}$, where $D$ is a diagonal matrix whose diagonal entries are the eigenvalues of $C$.
	Using~\eqref{eq:AB}, we get
	\begin{align*}
		\pm\Delta_{\K}(-1)
		& = \det(\varphi(\beta) - I)\big\vert_{t=-1}\\
		& = \det((A^{m_1} B^{-m_2})^n (AB)^{3\ell} - I)\\
		& = \det((-1)^\ell C^n - I)\\
		& = \det(PD^nP^{-1} - (-1)^\ell I)\\
		& = \det(D^n + (-1)^{\ell+1} I)\\
		& = \left[ \left(\frac{2 + m_1m_2 + \sqrt{m_1^2m_2^2 + 4m_1m_2}}{2}\right)^n +
		(-1)^{\ell+1} \right] \times\\
		& \qquad \left[ \left(\frac{2 + m_1m_2 - \sqrt{m_1^2m_2^2 + 4m_1m_2}}{2}\right)^n + (-1)^{\ell+1} \right]\\
		& = (-1)^{\ell+1} \left(\frac{2 + m_1m_2 + \sqrt{m_1^2m_2^2 + 4m_1m_2}}{2}\right)^n +\\
		& \qquad (-1)^{\ell+1} \left(\frac{2 + m_1m_2 - \sqrt{m_1^2m_2^2 + 4m_1m_2}}{2}\right)^n + 2.
	\end{align*}
	Since $\det(\K) = \vert \Delta_{\K}(-1) \vert$, we obtain~\eqref{eq:det}.
\end{proof}

\section{Applications of Theorem~\ref{det}}

Now we proceed to give some applications of Theorem~\ref{det}.
In this regard, we first recall the Binet formula for $m$-Lucas numbers $(m\in\Z_+)$, which is as follows:

\begin{thm}[Falcon {\cite[Theorem 2.2]{Fal}}]
	For each $m\in\Z_+$, $m$-Lucas numbers $\{L_{m,n} : n=0,1,2,\ldots\}$ are given by the formula $L_{m,n} = \Phi_m^n + (-\Phi_m^{-1})^n$, where $\Phi_m = \frac{m + \sqrt{m^2 + 4}}{2}$.
\end{thm}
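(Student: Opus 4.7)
The plan is to treat the $m$-Lucas numbers as the solution of a linear homogeneous recurrence and apply the standard characteristic-polynomial technique used to derive Binet-type identities. First I would recall the definition from \cite{Fal}: the $m$-Lucas sequence satisfies the two-term recurrence
\begin{equation*}
    L_{m,n+1} = m\,L_{m,n} + L_{m,n-1}, \qquad n\ge 1,
\end{equation*}
with initial data $L_{m,0}=2$ and $L_{m,1}=m$. Its characteristic polynomial is $x^2 - mx - 1$, whose roots are exactly $\Phi_m = \tfrac{m+\sqrt{m^2+4}}{2}$ and $\Psi_m = \tfrac{m-\sqrt{m^2+4}}{2}$.

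The key algebraic observation is that the product of the roots equals the constant term of the (monic) characteristic polynomial, so $\Phi_m\Psi_m = -1$, i.e.\ $\Psi_m = -\Phi_m^{-1}$. This is what converts the symmetric expression $\Phi_m^n + \Psi_m^n$ into the form $\Phi_m^n + (-\Phi_m^{-1})^n$ that appears in the statement, and it is the only non-routine identity in the argument.

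To conclude, I would verify that the candidate sequence $a_n := \Phi_m^n + \Psi_m^n$ both satisfies the recurrence and matches the initial conditions. The recurrence is immediate since each root obeys $x^2 = mx+1$, so $\Phi_m^{n+1} = m\Phi_m^n + \Phi_m^{n-1}$ and similarly for $\Psi_m$; summing gives $a_{n+1} = m\,a_n + a_{n-1}$. For the initial data, $a_0 = 2$ and $a_1 = \Phi_m + \Psi_m = m$, agreeing with $L_{m,0}$ and $L_{m,1}$. Uniqueness of solutions of a second-order linear recurrence with prescribed $n=0,1$ values then forces $L_{m,n} = a_n = \Phi_m^n + (-\Phi_m^{-1})^n$ for all $n\ge 0$.

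The main (and only) potential obstacle is notational: one must confirm that the convention adopted in \cite{Fal} for the recurrence and for $L_{m,0}, L_{m,1}$ is the one used above, since several inequivalent generalizations of classical Lucas numbers appear in the literature. Once this is settled, the proof is a textbook Binet-type derivation with no real difficulty.
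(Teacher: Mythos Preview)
Your argument is correct and is the standard Binet-type derivation for generalized Lucas sequences: find the roots of the characteristic polynomial $x^2-mx-1$, note that their product is $-1$ so that the second root is $-\Phi_m^{-1}$, and then check the initial conditions. There is nothing to compare against, however, because the paper does not supply its own proof of this statement; it is quoted verbatim as \cite[Theorem~2.2]{Fal} and used as a black box in the subsequent corollaries. Your caveat about verifying Falcon's conventions for the recurrence and initial values is well taken, but once those are confirmed the proof is exactly as you describe.
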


If we substitute $m_1=m_2=1$ and $\ell=0$ in Theorem~\ref{det}, then we recover a known formula for the determinant of the weaving knot $W(3,n)$ in terms of the $1$-Lucas numbers, or simply, Lucas numbers.

\begin{cor}[see \cite{KST, JNP}]
	Let $\{L_n : n=0,1,2,\ldots\}$ be the sequence of Lucas numbers.
	Then for the weaving knot $W(3,n)$, we have
	\begin{equation}\label{eq:detW}
		\det(W(3,n)) = L_{2n} - 2.
	\end{equation}
\end{cor}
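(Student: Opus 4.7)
The plan is to specialize Theorem~\ref{det} to the stated parameters and then recognize the resulting expression through the Binet formula for $1$-Lucas numbers recalled just above the corollary. Since the specialization is mechanical and the Binet formula does most of the work, the proof should be very short.

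First, I would substitute $m_1 = m_2 = 1$ and $\ell = 0$ into equation~\eqref{eq:det}. The discriminant $m_1^2 m_2^2 + 4m_1 m_2$ collapses to $5$, the scalar $2 + m_1 m_2$ becomes $3$, and the sign $(-1)^{\ell+1}$ becomes $-1$, leaving
$$\det(W(3,n)) \;=\; \left\vert \left(\frac{3+\sqrt{5}}{2}\right)^{\!n} + \left(\frac{3-\sqrt{5}}{2}\right)^{\!n} - 2 \right\vert.$$

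Next, I would identify the two bases with powers of the golden ratio $\Phi_1 = \tfrac{1+\sqrt 5}{2}$. A direct squaring gives $\Phi_1^{2} = \tfrac{3+\sqrt 5}{2}$, and then $\Phi_1^{-2} = \tfrac{3-\sqrt 5}{2}$ follows from the check $\tfrac{3+\sqrt 5}{2}\cdot\tfrac{3-\sqrt 5}{2} = 1$. Hence the bracketed sum equals $\Phi_1^{2n} + \Phi_1^{-2n}$, which by the Binet formula with $m=1$ and index $2n$ is exactly $L_{2n}$, since $(-\Phi_1^{-1})^{2n} = \Phi_1^{-2n}$.

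Finally, I would remove the absolute value. Because the even-indexed Lucas numbers satisfy $L_{2n} \ge L_0 = 2$ for all $n \ge 0$, the quantity $L_{2n} - 2$ is non-negative, and \eqref{eq:detW} follows. There is no real obstacle here; the only small observation needed is the algebraic identity $\Phi_1^{\pm 2} = \tfrac{3 \pm \sqrt{5}}{2}$, which converts the closed form of Theorem~\ref{det} into the Lucas form.
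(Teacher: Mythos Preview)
Your proof is correct and follows essentially the same route as the paper: specialize Theorem~\ref{det} at $m_1=m_2=1$, $\ell=0$, rewrite $\tfrac{3\pm\sqrt5}{2}$ as $\Phi_1^{\pm2}$, and invoke the Binet formula for Lucas numbers. You are in fact slightly more careful than the paper in explicitly justifying the removal of the absolute value via $L_{2n}\ge 2$.
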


\begin{proof}
	Note that $\hat{Q}_3(1,-1,n,0) = W(3,n)$.
	Suppose that $m_1=m_2=1$ and $\ell=0$ in Theorem~\ref{det}.
	Then~\eqref{eq:det} reduces to
	\begin{align*}
		\det(\hat{Q}_3(1,-1,n,0))
		& = \left(\frac{3 + \sqrt{5}}{2}\right)^n + \left(\frac{3 - \sqrt{5}}{2}\right)^n - 2\\
		& = \left(\frac{1 + \sqrt{5}}{2}\right)^{2n} + \left(\frac{1 - \sqrt{5}}{2}\right)^{2n} - 2\\
		& = L_{2n} - 2.\qedhere
	\end{align*}
\end{proof}

Singh and Chbili proposed that the knot determinant of $\hat{Q}_3(m,-m,n,\ell)$, which is a twisted hybrid weaving knot, for $\ell\in\{-1,0,1\}$ can be expressed in terms of generalized Lucas numbers, see~\cite[Conjecture\,2]{SC}.
In the next corollary, we give a proof of the same.
\begin{cor}
	Let $\{L_{m,n} : n=0,1,2,\ldots\}$ denote the sequence of $m$-Lucas numbers.
	Then for $\hat{W}_3(m,n), \hat{Q}_3(m,-m,n,\pm1)\in\F$,
	\begin{align}
		\det(\hat{W}_3(m,n)) & = L_{m,2n} - 2, \label{eq:dethW}\\		
		\det(\hat{Q}_3(m,-m,n,\pm1)) & = L_{m,2n} + 2.	\label{eq:detthW}
	\end{align}
\end{cor}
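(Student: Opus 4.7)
The plan is to specialise Theorem~\ref{det} to the case $m_1 = m_2 = m$ and then recognise the two bases appearing under the $n$-th power as $\Phi_m^{2}$ and $\Phi_m^{-2}$. First I would note that with $m_1 = m_2 = m$, the discriminant simplifies as $m_1^2 m_2^2 + 4 m_1 m_2 = m^2(m^2 + 4)$, so $\sqrt{m_1^2 m_2^2 + 4 m_1 m_2} = m\sqrt{m^2+4}$, and the two bases in~\eqref{eq:det} become $\dfrac{2 + m^2 \pm m\sqrt{m^2+4}}{2}$.

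Next I would verify by direct squaring that these two quantities are exactly $\Phi_m^{\,2}$ and $\Phi_m^{-2}$. Since $\Phi_m$ is the positive root of $x^2 - mx - 1 = 0$, its conjugate root is $\dfrac{m - \sqrt{m^2+4}}{2} = -\Phi_m^{-1}$, and squaring each root gives precisely the two bases above. This is the only step that requires any actual algebra, but it is routine.

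Once this identification is in place, the formula~\eqref{eq:det} becomes
\[
\det(\hat{Q}_3(m,-m,n,\ell)) = \left| \Phi_m^{2n} + \Phi_m^{-2n} + (-1)^{\ell+1}\cdot 2 \right|.
\]
Because $2n$ is even, $\Phi_m^{-2n} = (-\Phi_m^{-1})^{2n}$, so by the cited Binet formula the first two terms collapse to $L_{m,2n}$, yielding $\det(\hat{Q}_3(m,-m,n,\ell)) = \bigl|\,L_{m,2n} + (-1)^{\ell+1}\cdot 2\,\bigr|$.

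Finally I would dispatch the three cases. For $\ell = 0$ (where $\hat{Q}_3(m,-m,n,0) = \hat{W}_3(m,n)$ by definition), the expression becomes $|L_{m,2n} - 2|$, and since $L_{m,2n} \geq 2$ for every $m \in \Z_+$ and every $n \geq 0$ (this is immediate from $\Phi_m^{2n} + \Phi_m^{-2n} \geq 2$ by AM-GM), the absolute value drops to give~\eqref{eq:dethW}. For $\ell = \pm 1$ the inner expression is already positive, giving $L_{m,2n} + 2$ and hence~\eqref{eq:detthW}. The only potential obstacle is the square-root simplification in the first step; everything else is bookkeeping on top of Theorem~\ref{det} and the Binet formula.
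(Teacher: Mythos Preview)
Your proposal is correct and follows essentially the same route as the paper: specialise Theorem~\ref{det} to $m_1=m_2=m$, identify the two bases as $\Phi_m^{2}$ and $\Phi_m^{-2}$, and invoke the Binet formula for $L_{m,2n}$. If anything, you are more careful than the paper, which omits the explicit discriminant simplification and the AM-GM justification for dropping the absolute value.
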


\begin{proof}
	If $m_1=m_2=m$ in Theorem~\ref{det} and $\Phi_m = \frac{m + \sqrt{m^2 + 4}}{2}$, then \eqref{eq:det} gives
	\begin{align*}
		\det(\hat{Q}_3(m,-m,n,\ell)) & = \Bigg\vert
		\left(\frac{2 + m^2 + \sqrt{m^4 + 4m^2}}{2}\right)^n + \\
		& \qquad\quad
		\left(\frac{2 + m^2 - \sqrt{m^4 + 4m^2}}{2}\right)^n + (-1)^{\ell+1}\,2\ \Bigg\vert\\
		& = \big\vert \Phi_m^{2n} + \Phi_m^{-2n} + (-1)^{\ell+1}\,2 \big\vert\\
		& = \big\vert L_{m,2n} + (-1)^{\ell+1}\,2 \big\vert.
	\end{align*}
	Note that $\hat{Q}_3(m,-m,n,0) = \hat{W}_3(m,n)$.
	By substituting $\ell = 0$ and $\ell = \pm1$ in the previous equation, we obtain \eqref{eq:dethW} and \eqref{eq:detthW} respectively.
\end{proof}

Observe that we can also recover the determinant of torus knots and links of type $(2,q)$ from~\eqref{eq:det}.

\begin{cor}
	If $q\in\Z_+$ and $T(2,q)$ is the torus knot or link of type $(2,q)$, then $\det(T(2,q)) = q$.
\end{cor}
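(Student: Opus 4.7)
The plan is to exhibit specific values of $m_1, m_2, n, \ell$ such that $\hat{Q}_3(m_1, -m_2, n, \ell)$ is isotopic to $T(2,q)$, and then evaluate formula~\eqref{eq:det} at those values. A natural candidate is $m_1 = q$, $m_2 = 1$, $n = 1$, $\ell = 0$, so that the associated 3-braid is $\s_1^q \s_2^{-1}$.

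The first step is the identification of the closure. Since $\s_1^q$ belongs to the subgroup $B_2 \subset B_3$ (involving only the first two strands), the braid $\s_1^q \s_2^{-1}$ has the form $\beta \s_2^{-1}$ with $\beta \in B_2$. By Markov's destabilization move, its closure in $B_3$ coincides with the closure of $\s_1^q$ in $B_2$, which is by definition the torus knot or link $T(2,q)$. Hence $\hat{Q}_3(q, -1, 1, 0) = T(2,q)$, and $T(2,q) \in \F$.

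The second step is a direct substitution into~\eqref{eq:det}. With $m_1 = q$, $m_2 = 1$, $n = 1$, and $\ell = 0$, the radicand becomes $q^2 + 4q$, but the exponent $n = 1$ causes the two conjugate square-root terms to cancel, leaving
\begin{equation*}
\det(T(2,q)) = \left| \frac{2 + q + \sqrt{q^2 + 4q}}{2} + \frac{2 + q - \sqrt{q^2 + 4q}}{2} - 2 \right| = |(2 + q) - 2| = q.
\end{equation*}

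I do not expect any real obstacle here; the only subtlety is recognizing that a suitable Markov destabilization brings the 3-braid $\s_1^q \s_2^{-1}$ down to the standard 2-braid presentation of $T(2,q)$, after which Theorem~\ref{det} gives the answer immediately because the choice $n = 1$ collapses the two eigenvalue contributions to their sum, i.e.\ the trace $2 + m_1 m_2$ of the matrix $C$.
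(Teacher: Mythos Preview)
Your proof is correct and follows exactly the same approach as the paper: choose $m_1=q$, $m_2=1$, $n=1$, $\ell=0$, identify $\hat{Q}_3(q,-1,1,0)$ with $T(2,q)$, and substitute into~\eqref{eq:det}. In fact you supply more detail than the paper does, since the paper simply asserts the identification $\hat{Q}_3(q,-1,1,0)=T(2,q)$ without mentioning the Markov destabilization.
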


\begin{proof}
	Suppose that $m_1=q$, $m_2=1$, $n=1$ and $\ell=0$ in Theorem~\ref{det}.
	Then $\hat{Q}_3(q,-1,1,0) = T(2,q)$ and \eqref{eq:det} yields
	\begin{equation*}
		\det(T(2,q)) = \Bigg\vert
		\frac{2 + q + \sqrt{q^2+4q}}{2} + \frac{2 + q - \sqrt{q^2+4q}}{2} - 2\Bigg\vert = q.\qedhere
	\end{equation*}
\end{proof}

The set $\mathcal{Q}$ of quasi-alternating links is another well-known class of knots and links.
Every alternating knot or link is in $\mathcal{Q}$.
From the classification theorem of quasi-alternating links with braid index at most $3$ (see Baldwin~\cite[\textsc{Theorem} 8.6]{Bal}), it follows that $\hat{Q}_3(1,-m,n,l)$ is quasi-alternating if and only if $\ell\in\{-1,0,1\}$.

Motivated by intellectual curiosity, we give the following result wherein some families of quasi-alternating $3$-braid links are distinguished by their determinants expressed in terms of Lucas numbers.

\begin{cor}
	For any $\hat{Q}_3(1,-5,n,l)\in\F$, we have $\det(\hat{Q}_3(1,-5,n,l)) = L_{4n} + (-1)^{\ell+1}2$.
\end{cor}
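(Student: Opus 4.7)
The plan is to invoke Theorem~\ref{det} with $m_1=1$ and $m_2=5$ and then recognize the resulting closed form as a $4n$-th Lucas number. Substituting these values gives $m_1 m_2 = 5$ and $m_1^2 m_2^2 + 4 m_1 m_2 = 25 + 20 = 45$, so the two base quantities appearing in~\eqref{eq:det} simplify to $(7 \pm 3\sqrt{5})/2$.

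The key algebraic observation, which is the only nontrivial step, is that these two bases are exactly $\phi^{4}$ and $\phi^{-4}$, where $\phi = \Phi_1 = (1+\sqrt{5})/2$ is the golden ratio. I would verify this by a direct one-line computation: $\phi^2 = (3+\sqrt{5})/2$, hence
\begin{equation*}
\phi^4 = \left(\frac{3+\sqrt{5}}{2}\right)^2 = \frac{14 + 6\sqrt{5}}{4} = \frac{7 + 3\sqrt{5}}{2},
\end{equation*}
and by $\phi \cdot \phi^{-1} = 1$ (equivalently, the two bases multiply to $1$, as is clear from the characteristic polynomial $f(x) = x^2 - 7x + 1$) the other base is $\phi^{-4}$.

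With this identification, the $n$-th powers in~\eqref{eq:det} become $\phi^{4n} + \phi^{-4n}$, and since $4n$ is even the Binet-type formula $L_{m,n} = \Phi_m^n + (-\Phi_m^{-1})^n$ with $m=1$ yields $\phi^{4n} + \phi^{-4n} = L_{1,4n} = L_{4n}$. Thus~\eqref{eq:det} collapses to $|L_{4n} + (-1)^{\ell+1}\,2|$.

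Finally, to drop the absolute value I would remark that $L_{4n} \geq L_4 = 7 > 2$ for all $n \geq 1$, so $L_{4n} + (-1)^{\ell+1} 2 \geq 0$ for every integer $\ell$. The main obstacle is the identification $(7 \pm 3\sqrt{5})/2 = \phi^{\pm 4}$; once that is in hand, everything else is routine substitution.
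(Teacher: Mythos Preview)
Your proof is correct and follows essentially the same route as the paper: substitute $m_1=1$, $m_2=5$ into \eqref{eq:det}, identify $(7\pm 3\sqrt{5})/2$ as $\phi^{\pm 4}$, and invoke the Binet formula for Lucas numbers. You are in fact slightly more careful than the paper, since you explicitly verify the fourth-power identification and justify dropping the absolute value via $L_{4n}\ge L_4=7>2$, whereas the paper leaves both of these implicit.
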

\begin{proof}
	After substituting $m_1=1$ and $m_2=5$ in \eqref{eq:det}, we get
	\begin{align*}
		\det(\hat{Q}_3(1,-5,n,\ell)) & = \Bigg\vert
		\left( \frac{7 + \sqrt{45}}{2} \right)^n + \left( \frac{7 - \sqrt{45}}{2} \right)^n + (-1)^{\ell+1} 2\ \Bigg\vert\\
		& = \left(\frac{1 + \sqrt{5}}{2}\right)^{4n} + \left(\frac{1 - \sqrt{5}}{2}\right)^{4n} + (-1)^{\ell+1} 2\\
		& = L_{4n} + (-1)^{\ell+1} 2.\qedhere
	\end{align*}
\end{proof}

In the previous corollary, $\hat{Q}_3(1,-5,1,2)$ represents the well-known Perko's pair $\{10_{161}, 10_{162}\}$.
We conclude this section with Table~\ref{list}.

In Table~\ref{list}, $8_{20}, 10_{125}, 10_{126}$, and $10_{157}$ are quasi-alternating knots, which are nonalternating.
We note that from the classification theorem of quasi-alternating $3$-braid links, \cite[\textsc{Theorem} 8.6]{Bal}, it follows that $10_{139}$ and the Perko's pair $\{10_{161}, 10_{162}\}$ are not quasi-alternating knots.

Moreover, $10_{139}$ is only $3$-colorable and Perko's pair is only $5$-colorable because their determinants are $3$ and $5$, respectively.

It is noteworthy that $10_{139}\in\F$, whilst $10_{139}\notin\mathcal{Q}$.
On the other hand, $\mathbf{cl}(\s_1 \s_2^{-1}\s_1 \s_2^{-3})=6_2\in\mathcal{Q}$ but $6_2\notin\F$.

\begin{table}[h]
	\centering
	\caption{Determinants of some twisted generalized hybrid weaving knots calculated using~\eqref{eq:det}.}\label{list}
	\begin{tabular}{lcr}
		\toprule
		$(m_1,m_2,n,l)$ & $\K$ & $\det(\K)$\\
		\midrule
		$(3,1,1,0)$		& $3_1$		& $3$\\
		$(1,1,2,0)$		& $4_1$		& $5$\\
		$(5,1,1,0)$		& $5_1$		& $5$\\
		$(3,1,1,-1)$	& $5_2$		& $7$\\
		$(7,1,1,0)$		& $7_1$		& $7$\\
		$(3,3,1,1)$		& $7_3$		& $13$\\
		$(3,1,2,0)$		& $8_5$		& $21$\\
		$(1,1,4,0)$		& $8_{18}$	& $45$\\
		$(1,5,1,1)$		& $8_{20}$	& $9$\\
		$(9,1,1,0)$		& $9_1$		& $9$\\
		$(5,3,1,1)$		& $9_3$		& $19$\\
		$(1,1,5,0)$		& $10_{123}$& $121$\\
		$(1,7,1,1)$		& $10_{125}$& $11$\\
		$(5,3,1,-1)$	& $10_{126}$& $19$\\
		$(1,3,1,2)$		& $10_{139}$& $3$\\
		$(1,1,4,-1)$	& $10_{157}$& $49$\\
		$(1,5,1,2)$		& $10_{161} = 10_{162}$& $5$\\
		\bottomrule
	\end{tabular}
\end{table}

\section*{Acknowledgement}
The authors owe special thanks to Dr.\,Vivek K.\,Singh for pointing out that the determinant formula for $3$-strand weaving knots can be generalized to include twisted hybrid weaving knots.

The second author acknowledges the support given by the Science and Engineering Research Board (SERB), Department of Science \& Technology, Government of India under grant-in-aid Mathematical Research Impact Centric Support (MATRICS) with F.\,No.\,MTR/2021/000394.

\pagebreak
\bibliographystyle{plain}
\bibliography{references}

\end{document}